\newcommand{\Sym}{\mathop{\mathrm{Sym}}}
\newcommand{\Aut}{\mathop{\mathrm{Aut}}}
\newcommand{\Out}{\mathop{\mathrm{Out}}}
\newtheorem{prop}{Proposition}[section]
\newtheorem{thrm}[prop]{Theorem}
\newtheorem{conj}[prop]{Conjecture}
\newtheorem{example}[prop]{Example}
\newtheorem{lemma}[prop]{Lemma}
\def\cent#1#2{{\bf C}_{{#1}}{{(#2)}}}
\def\norm#1#2{{\bf N}_{{#1}}{{(#2)}}}
\begin{document}

\title[Two point stabilizers]{A generalization of Sims conjecture for finite primitive groups and two point stabilizers in primitive groups}  
\author{Pablo Spiga}
\address{Dipartimento di Matematica e Applicazioni, University of Milano-Bicocca, Via Cozzi 55, 20125 Milano, Italy} 
\email{pablo.spiga@unimib.it}

\subjclass[2000]{Primary 20B25; Secondary 05E18}

\keywords{arc-transitive graph, primitive group, Sims Conjecture, subdegree} 
\begin{abstract}
In this paper we propose a refinement of Sims conjecture concerning the cardinality of the point stabilizers in finite primitive groups and we make some progress towards this refinement.

In this process, when dealing with primitive groups of diagonal type, we construct a finite primitive group $G$ on $\Omega$ and two distinct points $\alpha,\beta\in \Omega$ with $G_{\alpha\beta}\unlhd G_\alpha$ and  $G_{\alpha\beta}\ne 1$, where $G_{\alpha}$ is the stabilizer of $\alpha$ in $G$ and $G_{\alpha\beta}$ is the stabilizer of $\alpha$ and $\beta$ in $G$. In particular, this example gives an answer to a  question raised independently by Peter Cameron in~\cite{Cameron} and by Alexander Fomin in the Kourovka Notebook~\cite[Question~9.69]{KN}.
\end{abstract}

\maketitle

\section{Introduction}
Let $G$ be a finite primitive group acting on a set $\Omega$ and let $\alpha\in\Omega$. The \textbf{\em subdegrees} of $G$ are the lengths of the orbits of the \textbf{\em point stabilizer} $G_\alpha:=\{g\in G\mid \alpha^g=\alpha\}$ on $\Omega$. 

 Given a subdegree $d$ of $G$, there is no  bound on the degree $|\Omega|$ of $G$, as a function of $d$ {\em only}. For example, for any prime $p$, the dihedral group of order $2p$ has a faithful primitive permutation representation of degree  $p$ with subdegree $d=2$. Despite this, if a finite primitive group $G$ has  a small subdegree, then the structure of $G_\alpha$ is rather restricted, see for instance~\cite{Quirin,Sims,Wong} and the much more recent results in~\cite{FGLPRV,LiLuMa}. Following the investigations on the cases $d=3$ and $d=4$, Charles Sims~\cite{Sims} was lead to conjecture the following. 

\begin{thrm}[Sims conjecture]\label{thrm:sims}
There is a function $f:\mathbb{N}\to \mathbb{N}$ such that, if $G$ is a finite primitive group with a suborbit of length $d>1$, then the stabilizers have order at most $f(d)$.
\end{thrm}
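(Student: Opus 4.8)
The plan is to reformulate the statement group-theoretically, reduce it to the almost simple case via the O'Nan--Scott theorem, and there invoke the classification of finite simple groups; the classification is almost certainly unavoidable, since the conclusion is a finiteness statement with no a priori structural hypothesis on $G$. First I would exploit primitivity to convert ``bounded subdegree'' into ``bounded intersections of conjugate maximal subgroups''. Since $G$ is primitive, $G_\alpha$ is a maximal subgroup $M$ of $G$, and for any $\beta\in\Omega$ the stabilizer $G_\beta$ is a conjugate $M^g$; the suborbit of $G_\alpha$ containing $\beta$ has length $|G_\alpha:G_{\alpha\beta}|=|M:M\cap M^g|$. Thus having a suborbit of length $d$ means exactly that some conjugate satisfies $|M:M\cap M^g|=d$, and the theorem becomes: there is a function $f$ so that whenever $M$ is maximal in a finite primitive $G$ and $|M:M\cap M^g|=d>1$ for some $g\in G$, one has $|M|\le f(d)$. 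Writing $\Gamma$ for the corresponding orbital graph of valency $d$, so that $G$ acts vertex- and arc-transitively and $\Gamma(\alpha)$ is the suborbit, it suffices to bound $|G_\alpha|$; and since $G_\alpha$ acts on $\Gamma(\alpha)$ with kernel $G_\alpha^{[1]}$ and $G_\alpha/G_\alpha^{[1]}\hookrightarrow\Sym(d)$, one may further reduce to bounding $|G_\alpha^{[1]}|$.

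Second, I would bring in the local machinery. The Thompson--Wielandt theorem controls the pointwise stabilizer $G_{\alpha\beta}^{[1]}=G_\alpha^{[1]}\cap G_\beta^{[1]}$ of the combined neighbourhood $\Gamma(\alpha)\cup\Gamma(\beta)$, forcing it (under primitivity of the subconstituent $G_\alpha^{\Gamma(\alpha)}$, a hypothesis one must manage when that subconstituent is imprimitive) to be a $p$-group for a single prime $p$. Since $G_\alpha^{[1]}/G_{\alpha\beta}^{[1]}$ embeds in a symmetric group of degree at most $d-1$, and the prime $p$ is bounded in terms of $d$ (it divides the order of the transitive subconstituent $G_\alpha^{\Gamma(\alpha)}$, which divides $d!$), a bound on the order of this $p$-group would finish the local part; the content is therefore to bound its nilpotency class or exponent.

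Third, and this is where I expect the real work, I would stratify $G$ by the O'Nan--Scott theorem according to the structure of $N=\mathrm{soc}(G)$. The product-action, diagonal and twisted-wreath types should be reduced, by projecting to a simple direct factor of $N$ and tracking how a small suborbit constrains the factorwise action, to the almost simple and affine cases. In the affine case $G_\alpha$ is an irreducible subgroup of $\mathrm{GL}(V)$ with a short orbit on $V\setminus\{0\}$, and one must bound $|G_\alpha|$ using the Aschbacher subgroup structure of linear groups; in the almost simple case, with $T\le G\le\Aut(T)$ and $G_\alpha$ maximal in $G$, one must run through the families of $T$ and show that a large maximal subgroup $M$ can have a conjugate meeting it in small index $|M:M\cap M^g|$ only when $|M|$ is small. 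The main obstacle is precisely this last case-analysis for simple groups of Lie type, where the rank and the field size can both grow: controlling $|M\cap M^g|$ uniformly across these families, and in particular bounding the local $p$-subgroup of the previous step, is the heart of the matter and the reason the full theorem rests on the classification.
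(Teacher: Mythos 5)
First, a point of comparison: the paper does not actually prove this statement. Theorem~\ref{thrm:sims} is stated as background and attributed to Cameron, Praeger, Saxl and Seitz~\cite{CPSS}, whose proof uses the O'Nan--Scott theorem and the classification of finite simple groups; the paper's ``proof'' is that citation. So your proposal can only be measured against the CPSS argument, and in outline it does track it: the translation of a suborbit of length $d$ into $|M:M\cap M^g|=d$ for a maximal subgroup $M$, the use of the Thompson--Wielandt theorem to make the pointwise stabilizer $G_{\alpha\beta}^{[1]}$ a $p$-group, the O'Nan--Scott stratification, and the final CFSG-driven analysis of maximal subgroups of almost simple groups are exactly the ingredients of that proof.

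As a proof, however, your proposal has a genuine gap, and you name it yourself: the entire almost simple case --- showing that a maximal subgroup $M$ of an almost simple group with $|M:M\cap M^g|=d$ small must itself have bounded order --- is only described, never carried out, and no function $f$ is ever produced. The ``local part'' is not finished either: Thompson--Wielandt gives no bound on the \emph{order} of the $p$-group $G_{\alpha\beta}^{[1]}$, only on its structure, and bounding that order is precisely what requires the classification-based case analysis, so the local machinery alone buys nothing quantitative. Two further points. The reductions of the PA, SD and CD types to the almost simple case are themselves nontrivial and are not done; the present paper's Section~\ref{sec:diagonal} shows how delicate the diagonal case is (there $G_\alpha^{+[1]}$ can genuinely be nontrivial, of order $\ell+1$). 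Finally, your treatment of the affine case is misdirected: you propose an Aschbacher-class analysis of irreducible linear groups with a short orbit, but no such machinery is needed. Whenever $G$ has a regular normal subgroup (types HA, TW, HS, HC), connectedness of the orbital graph forces $G_\alpha^{+[1]}=1$ --- this is the paper's Proposition~\ref{prop:1} --- so $G_\alpha$ embeds in $\Sym(d)$ and $|G_\alpha|\le d!$ with no classification input at all; routing the affine case through the subgroup structure of $\mathrm{GL}(V)$ makes an elementary case needlessly depend on deep theory.
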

This theorem was proved by Cameron, Praeger, Saxl and Seitz~\cite{CPSS}  using the O'Nan-Scott Theorem and the  recently (at the time) announced classification of finite simple groups. The CFSGs spurred a new vitality in the old subject of finite permutation groups and this result was one of the first major applications of the CFSGs, see also~\cite{Peter}.
  
In this paper we propose a strengthening of Sims conjecture that in our opinion  captures the structure of point stabilizers of primitive groups to a finer degree. As well as Sims conjecture, our conjecture can be phrased in purely group-theoretic terminology, but it is better understood borrowing some terminology from graph theory.

Let $G$ be a finite primitive group acting on a set $\Omega$ and let $\alpha$ and $\beta$ be two elements of $\Omega$. The \textbf{\em orbital graph} $\Gamma$ determined by the ordered pair $(\alpha,\beta)$, is the directed graph with vertex set $\Omega$ and with arc set 
$(\alpha,\beta)^G:=\{(\alpha^g,\beta^g)\mid g\in G\}$. 
Clearly, $G$ is  a group of automorphisms of $\Gamma$ acting primitively on its vertex set and 
$\Gamma$ is undirected if and only if the \textbf{\em orbital} $(\alpha,\beta)^G$ is \textbf{\em self-paired}, that is, $(\alpha,\beta)^G=(\beta,\alpha)^G$. We denote by 
$\Gamma^+(\gamma):=\{\delta\mid (\gamma,\delta)\in (\alpha,\beta)^G\}$ and by $\Gamma^-(\gamma):=\{\delta\in\Omega\mid (\delta,\gamma)\in (\alpha,\beta)^G\}$ the \textbf{\em out-neighborhood} and the \textbf{\em in-neighborhood}, respectively, of the vertex $\gamma$ of $\Gamma$. As $\Omega$ is finite, it follows that the \textbf{\em out-valency} and the \textbf{\em in-valency} of $\Gamma$ are equal, that is, $|\Gamma^+(\gamma)|=|\Gamma^-(\gamma)|$ for every $\gamma\in \Omega$. Moreover, if we denote by $d$ this valency, then $d$ equals the cardinality of the suborbit $\beta^{G_{\alpha}}=\Gamma^+(\alpha)$. In particular, $d$ is a subdegree of $G$.

Given two vertices $\alpha$ and $\beta$ as above, we write $G_{\alpha\beta}:=G_\alpha\cap G_\beta$. Moreover, we denote with $$G_{\alpha}^{+[1]}:=\bigcap_{\delta\in\Gamma^+(\alpha)}G_{\alpha\delta} \,\hbox{ and }\, G_\beta^{-[1]}:=\bigcap_{\delta\in\Gamma^-(\beta)}G_{\delta\beta},$$ the kernel of the action of $G_\alpha$ on $\Gamma^+(\alpha)$ and of $G_\beta$ on $\Gamma^{-}(\beta)$, respectively. Observe that the notation $G_\alpha^{+[1]}$ and $G_\beta^{-[1]}$ is slightly misleading because it does not show the dependency of this subgroup of $G$ from the graph $\Gamma$, however for not making the notation too cumbersome to use we prefer not to  attach the label ``$\Gamma$'' in the notation for $G_\alpha^{+[1]}$ and $G_\beta^{-[1]}$.

 Furthermore, we denote by $G_\alpha^{\Gamma^+(\alpha)}\cong G_\alpha/G_{\alpha}^{+[1]}$ and by $G_{\beta}^{\Gamma^-(\beta)}\cong G_\beta/G_{\beta}^{-[1]}$, the permutation group induced by $G_\beta$ on $\Gamma^+(\alpha)$ and on $\Gamma^-(\beta)$, respectively. The groups $G_\alpha^{\Gamma^+(\alpha)}$ and $G_{\beta}^{\Gamma^-(\beta)}$ are (not necessarily isomorphic) permutation groups of degree $d$ and they are sometimes refereed to as the \textbf{\em local groups}, see~\cite{PSV1,PSV2} where this terminology is particularly suited.

Using the notation that we have established above, Sims conjecture claims that, when $d>1$, the cardinality of $G_\alpha$ is bounded above by a function of the valency $d$ of the orbital graph $\Gamma$. In other words, the order of the vertex stabilizer $G_\alpha$ is bounded above by a function of the local group $G_{\alpha}^{\Gamma^+(\alpha)}$. (Actually, the order of the vertex stabilizer is bounded above simply by a function of the degree of the local group.) Broadly speaking, we wish to make a step further and we wonder whether, besides some families that can be explicitly classified, the order of the arc stabilizer $G_{\alpha\beta}$ is bounded above by a function of the order of the point stabilizer $G_{\alpha\beta}^{\Gamma^+(\alpha)}$ of the local group $G_{\alpha}^{\Gamma^+(\alpha)}$. More precisely, we propose the following conjecture:
\begin{conj}\label{q:1}
There exists a function $g:\mathbb{N}\to \mathbb{N}$ such that, if $G$ is a finite primitive group with a suborbit $\beta^{G_\alpha}$ of length $d>1$, then either
\begin{description}
\item[(i)] $G_{\alpha\beta}$ has order at most $g(|G_{\alpha\beta}:G_{\alpha}^{+[1]}|)$, or
\item[(ii)]$G$ is in a well-described and well-determined list of exceptions.
\end{description}
\end{conj}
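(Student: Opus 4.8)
The plan is to mirror the architecture of the Cameron--Praeger--Saxl--Seitz proof of Sims conjecture (Theorem~\ref{thrm:sims}): apply the O'Nan--Scott Theorem to reduce to the finitely many types of finite primitive groups (affine, almost simple, simple and compound diagonal, product action, and twisted wreath), and in each type either establish the bound in~(i) or recognize $G$ as a member of the exceptional list in~(ii). Before doing so, it is convenient to reformulate the inequality. Since $\beta\in\Gamma^+(\alpha)$ we have $G_\alpha^{+[1]}\le G_{\alpha\beta}$, and $G_\alpha^{+[1]}$ is normal in $G_\alpha$ because it is the kernel of the action of $G_\alpha$ on $\Gamma^+(\alpha)$. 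Writing $m:=|G_{\alpha\beta}:G_\alpha^{+[1]}|$ for the order of the point stabilizer $G_{\alpha\beta}^{\Gamma^+(\alpha)}$ of the local group, we have $|G_{\alpha\beta}|=m\cdot|G_\alpha^{+[1]}|$, so clause~(i) is equivalent to bounding the order of the local kernel $G_\alpha^{+[1]}$ by a function of $m$. The task is therefore to control the kernel of the local action, a problem of the same flavor as the analysis of vertex stabilizers in arc-transitive graphs carried out in~\cite{PSV1,PSV2}.

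The shape of the exceptional list is dictated by a degenerate phenomenon that must be isolated first. If $G_{\alpha\beta}\unlhd G_\alpha$, then for every $g\in G_\alpha$ the conjugate $G_{\alpha\beta}^g=G_{\alpha\beta}$ fixes $\beta^g$, so $G_{\alpha\beta}$ fixes every vertex of the orbit $\Gamma^+(\alpha)=\beta^{G_\alpha}$; hence $G_{\alpha\beta}=G_\alpha^{+[1]}$ and $m=1$. The diagonal-type construction announced in the abstract produces exactly such a configuration with $G_{\alpha\beta}\ne 1$ of unbounded order, so groups of this kind cannot satisfy~(i) and must be gathered into~(ii). A key preliminary step is thus to show that, outside an explicit family, the local action of $G_\alpha$ on $\Gamma^+(\alpha)$ is sufficiently non-degenerate (for instance, that $G_{\alpha\beta}$ is not normal in $G_\alpha$, equivalently $m>1$), so that the reformulated bound has a chance to hold.

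With these reductions in place I would treat the O'Nan--Scott types as follows. In the affine case $G=V\rtimes H$ with $H\le\mathrm{GL}(V)$ irreducible, $G_\alpha=H$ and the whole problem becomes linear-algebraic: one bounds the pointwise stabilizer in $H$ of the set of vectors attached to $\Gamma^+(\alpha)$, using irreducibility of $V$ to force $G_\alpha^{+[1]}$ to be small once $m$ is controlled. In the almost simple case I would combine the Sims-type bound on $|G_\alpha|$ from~\cite{CPSS} with CFSG-based information on the maximal subgroups of almost simple groups and on their permutation actions, in order to bound $G_\alpha^{+[1]}$ directly. For product action and twisted wreath types, I would coordinatize $\Omega$ as a Cartesian power and relate the local group, the kernel $G_\alpha^{+[1]}$, and the invariant $m$ of $G$ to the corresponding data of the almost simple or diagonal component acting on the base, so that transferring a bound from the component to $G$ becomes essentially bookkeeping, provided the component is not itself exceptional. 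The main obstacle is the (simple and compound) diagonal case: this is precisely where the genuine exceptions live, and the delicate point is to describe the list~(ii) explicitly --- pinning down exactly which socle $T^k$, which diagonal subgroup, and which orbital $(\alpha,\beta)^G$ give rise to an unbounded $G_\alpha^{+[1]}$ with bounded $m$ --- and then to prove that this list is exhaustive, so that for every remaining diagonal group $G_\alpha^{+[1]}$ is bounded in terms of $m$.
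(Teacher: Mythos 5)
There is a fundamental issue to flag before any technical comparison: the statement you were asked to prove is a \emph{conjecture}, and the paper does not prove it either --- it explicitly says only that it gives ``some evidence towards the veracity of Conjecture~\ref{q:1}.'' Your proposal is likewise not a proof but a research program, and several of its steps are exactly the open parts of the problem. Concretely: your treatment of the almost simple case is a single sentence (``combine the Sims-type bound from~\cite{CPSS} with CFSG-based information'') with no argument for why the kernel $G_\alpha^{+[1]}$ would be bounded by a function of $m=|G_{\alpha\beta}:G_\alpha^{+[1]}|$ rather than merely by a function of the valency $d$; this is a genuine jump, since Sims-type bounds control $|G_\alpha|$ in terms of $d$, which can be huge while $m$ stays constant (this distinction is the whole point of the conjecture, as Example~\ref{example} in the paper illustrates). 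Likewise, your claim that the product action and twisted wreath reductions are ``essentially bookkeeping'' is unsubstantiated, and your final paragraph on the diagonal case openly concedes the hardest step --- producing the explicit list of exceptions and proving it exhaustive --- which is precisely what nobody (including the paper) has done.

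Where your sketch does overlap with the paper's actual partial results, it is broadly in the right direction but misses the cleaner arguments. For the affine case, and in fact for all of types HA, TW, HS, HC simultaneously, the paper's Proposition~\ref{prop:1} proves the much stronger statement $G_\alpha^{+[1]}=1$: if $N$ is a regular normal subgroup, then $G_\alpha^{+[1]}$ centralizes $\langle n_\beta\mid\beta\in\Gamma^+(\alpha)\rangle$, which equals $N$ by connectivity of the orbital graph, so the kernel is trivial and $g$ can be taken constant. In particular your plan to handle twisted wreath type by coordinatizing $\Omega$ as a Cartesian power overcomplicates a case that follows instantly from the regular normal socle. Your isolation of the degenerate case $G_{\alpha\beta}\unlhd G_\alpha$ (forcing $m=1$ and $G_{\alpha\beta}=G_\alpha^{+[1]}$) is correct and matches the paper's discussion of the Cameron--Fomin question, and your identification of the diagonal type as the home of the genuine exceptions agrees with the paper's Example~\ref{examplemrone}, where $m=p$ is fixed while $|G_\alpha^{+[1]}|=p^k$ is unbounded. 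But the paper's substantive contribution in the diagonal case --- Theorem~\ref{SD}, showing that for SD type either $G_\alpha^{+[1]}=1$ or $G_\alpha^{+[1]}$ is regular of order $\ell+1$ inside $\Sym(\ell+1)$ and admits a homomorphism into $T$ recording first coordinates --- has no counterpart in your sketch, and even with that theorem in hand the conjecture remains open. In short: your proposal is a reasonable outline of the same program the paper pursues, but it proves nothing beyond what is already implicit in the problem statement, and it should not be presented as a proof.
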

It is quite unfortunate, that we cannot omit alternative~${\bf (ii)}$ in Conjecture~\ref{q:1}. Examples in this direction are intricate to construct and we give an example later in this paper, see Example~\ref{examplemrone}. A positive solution to Conjecture~\ref{q:1} can be seen as a strengthening of Sims conjecture, this is easy to see but we postpone the  proof to Section~\ref{sub:21}. However, a positive answer to Conjecture~\ref{q:1} is a  stronger result than Sims conjecture: we refer the reader to Example~\ref{example} to see this and   to capture the idea behind our question. 
In this paper, we give some evidence towards the veracity of Conjecture~\ref{q:1}.

A particularly interesting case for our conjecture is when $G_{\alpha\beta}\unlhd G_\alpha$. In this case, $G_\alpha^{+[1]}=G_{\alpha\beta}$, that is, the local group $G_\alpha^{\Gamma^+(\alpha)}$ is regular. In this case, it was asked by Peter Cameron~\cite{Cameron} whether $G_{\alpha\beta}=1$, that is, whether the whole vertex stabilizer $G_\alpha$ acts regularly on the out-neighborhood $\Gamma^+(\alpha)$. This question was also proposed by Fomin in the Kourovka Notebook~\cite[Question~9.69]{KN}. Remarkable evidence towards the veracity of the question of Cameron and Fomin is given by Konygin~\cite{AVK0,AVK,AVK1,AVK2}, however to the best of our knowledge this question is still open. To some extent, Conjecture~\ref{q:1} can be seen as a generalization of the question of Cameron and Fomin, allowing $G_{\alpha\beta}\centernot\unlhd G_\alpha$, but relaxing the conclusion when $G_{\alpha\beta}\unlhd G_\alpha$. By investigating Conjecture~\ref{q:1} for primitive groups of diagonal type, we construct a finite primitive group with $G_{\alpha\beta}\unlhd G_\alpha$ and $G_{\alpha\beta}\ne 1$, thus giving an answer in the negative to the question of Cameron and Fomin. We present this example in Section~\ref{sec:example}.

\section{Basic results}

\subsection{Relations between Sims conjecture and Conjecture~\ref{q:1}}\label{sub:21}
\begin{lemma}\label{lemma2}
Suppose that  Conjecture~$\ref{q:1}$ holds true. Then, any group satisfying part $\mathbf{(i)}$ of Conjecture~$\ref{q:1}$, satisfies Sims conjecture.
\end{lemma}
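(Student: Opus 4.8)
The plan is to reduce the bound on $|G_\alpha|$ required by Sims conjecture to the bound on $|G_{\alpha\beta}|$ supplied by part $\mathbf{(i)}$, using the orbit--stabilizer theorem together with the observation that the argument $|G_{\alpha\beta}:G_\alpha^{+[1]}|$ of $g$ is itself controlled by $d$ alone. So fix a finite primitive group $G$ with a suborbit $\beta^{G_\alpha}=\Gamma^+(\alpha)$ of length $d>1$ satisfying part $\mathbf{(i)}$.

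First I would record the structural observation that, since $\beta\in\Gamma^+(\alpha)$, the kernel $G_\alpha^{+[1]}=\bigcap_{\delta\in\Gamma^+(\alpha)}G_{\alpha\delta}$ is contained in $G_{\alpha\beta}$; hence $G_{\alpha\beta}/G_\alpha^{+[1]}$ is a genuine subgroup of the local group $G_\alpha^{\Gamma^+(\alpha)}\cong G_\alpha/G_\alpha^{+[1]}$, and in fact it is precisely the stabilizer in $G_\alpha^{\Gamma^+(\alpha)}$ of the point $\beta\in\Gamma^+(\alpha)$. In particular the index $|G_{\alpha\beta}:G_\alpha^{+[1]}|$ equals the order of a point stabilizer of the transitive permutation group $G_\alpha^{\Gamma^+(\alpha)}$ of degree $d$.

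Next I would bound this index purely in terms of $d$. Since $G_\alpha$ acts transitively on the orbit $\Gamma^+(\alpha)=\beta^{G_\alpha}$ of size $d$, the local group $G_\alpha^{\Gamma^+(\alpha)}$ is a transitive subgroup of $\Sym(d)$, so its order divides $d!$ and a point stabilizer has order at most $(d-1)!$. Thus $|G_{\alpha\beta}:G_\alpha^{+[1]}|\le (d-1)!$. Feeding this into part $\mathbf{(i)}$ gives $|G_{\alpha\beta}|\le g\big(|G_{\alpha\beta}:G_\alpha^{+[1]}|\big)$, while the orbit--stabilizer theorem yields $|G_\alpha|=|G_\alpha:G_{\alpha\beta}|\,|G_{\alpha\beta}|=d\,|G_{\alpha\beta}|$. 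Combining these, $|G_\alpha|=d\,|G_{\alpha\beta}|\le d\cdot g\big(|G_{\alpha\beta}:G_\alpha^{+[1]}|\big)$, where the argument of $g$ is at most $(d-1)!$.

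The only point requiring care --- and essentially the sole obstacle --- is that $g$ is not assumed to be monotone, so one cannot directly substitute the upper bound $(d-1)!$ for the argument $|G_{\alpha\beta}:G_\alpha^{+[1]}|$ inside $g$. I would circumvent this by passing to the non-decreasing majorant $\bar g(n):=\max_{1\le m\le n}g(m)$, for which $g\big(|G_{\alpha\beta}:G_\alpha^{+[1]}|\big)\le\bar g\big((d-1)!\big)$ whenever $|G_{\alpha\beta}:G_\alpha^{+[1]}|\le(d-1)!$. Setting $f(d):=d\cdot\bar g\big((d-1)!\big)$ then gives $|G_\alpha|\le f(d)$, a bound depending on $d$ alone, which is exactly the conclusion of Sims conjecture. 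Everything apart from this monotonicity adjustment is a direct application of the orbit--stabilizer theorem and the definition of the local group.
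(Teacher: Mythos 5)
Your proposal is correct and follows essentially the same route as the paper: identify $|G_{\alpha\beta}:G_\alpha^{+[1]}|$ as the order of a point stabilizer in the transitive local group $G_\alpha^{\Gamma^+(\alpha)}$ of degree $d$, apply part $\mathbf{(i)}$, and conclude via orbit--stabilizer with $|G_\alpha|=d\,|G_{\alpha\beta}|$. The paper handles the non-monotonicity of $g$ exactly as you do, only packaged slightly differently --- it defines $g'(d)$ as the maximum of $g(|H_\delta|)$ over all transitive groups $H$ of degree $d$, whereas you take the majorant $\bar g\bigl((d-1)!\bigr)$; both are finite maxima depending on $d$ alone, so the two arguments coincide in substance.
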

\begin{proof}Let $g$ be the function arising from a positive solution of Conjecture~\ref{q:1}.
We define a function $g':\mathbb{N}\to \mathbb{N}$ by setting $$g'(d):=\max\{g(|H_\delta|)\mid H\textrm{ transitive on }\{1,\ldots,d\}, \delta\in\{1,\ldots,d\}\}.$$

Now, let $G$ be a finite primitive group satisfying part {\bf (i)} of Conjecture~\ref{q:1}, let $\beta^{G_\alpha}$ be a suborbit of $G$ of cardinality $d>1$ and let $\Gamma$ be the corresponding orbital graph. Set $H:=G_\alpha^{+\Gamma(\alpha)}$ and observe that $H$ is a transitive permutation group on $\Gamma^+(\alpha)$ of degree $d$. The stabiliser of a point in $H$ is $G_{\alpha\beta}^{\Gamma^+(\alpha)}$. Since $G$ satisfies  Conjecture~\ref{q:1}~{\bf (i)}, we have 
$$|G_{\alpha\beta}|\le g(|G_{\alpha\beta}^{\Gamma^+(\alpha)}|)\le g'(d).$$ Now, $|G_\alpha|=|G_\alpha:G_{\alpha\beta}||G_{\alpha\beta}|=d|G_{\alpha\beta}|\le dg'(d)$. Therefore, Sims conjecture holds for $G$ by taking $f(d):=dg'(d)$.
\end{proof}

\begin{example}\label{example}{\rm 
It is interesting to consider finite primitive groups having a suborbit $\beta^{G_\alpha}$ of odd cardinality $d$ with $G_\alpha$ acting as a dihedral group on $\beta^{G_\alpha}$. In this case, $|G_{\alpha}:G_{\alpha\beta}|=d$ and  $|G_{\alpha\beta}:G_{\alpha}^{+[1]}|=2$. It follows from the main result in~\cite{Sami} (and also from the work of Verret on $p$-subregular actions~\cite{Verret,Verret1}), that $|G_{\alpha\beta}|$ divides $16$. In particular, $|G_{\alpha\beta}|\le 16$. Observe that this upper bound on $|G_{\alpha\beta}|$ does not depend on $d$.

In particular, in this example, Sims conjecture requires bounding $|G_\alpha|$ as a function of $d$. However, Conjecture~\ref{q:1} is more demanding: since $|G_{\alpha\beta}:G_{\alpha}^{+[1]}|=2$ is a constant, Conjecture~\ref{q:1} demands either regarding $G$ is an exception or bounding $|G_{\alpha\beta}|$ from above with an absolute constant. Luckily, in this case, $G$ is not an exception, because from~\cite{Verret,Verret1}, $|G_{\alpha\beta}|\le 16$.}
\end{example}

Lemma~\ref{lemma2} shows that, if Conjecture~\ref{q:1} holds true, then it might be possible proving Sims conjecture by checking (possibly with a direct case-by-case inspection) the groups falling in part {\bf (ii)}.

\subsection{The O'Nan-Scott theorem and our investigation}\label{sub:22}
The modern key for analyzing a finite primitive permutation group $G$ is to
study the \textit{socle} $N$ of $G$, that is, the subgroup generated
by the minimal normal subgroups of $G$. The socle of an arbitrary
finite group is isomorphic to the non-trivial direct product of simple
groups; moreover, for finite primitive groups these simple groups are
pairwise isomorphic. The O'Nan-Scott theorem describes in details the
embedding of $N$ in $G$ and collects some useful information about the
action of $N$. In~\cite[Theorem]{LPSLPS} five types of primitive groups
are defined (depending on the group- and action-structure of the
socle), namely HA (\emph{Affine}), AS (\emph{Almost Simple}), SD
(\emph{Simple Diagonal}), PA (\emph{Product Action}) and TW
(\emph{Twisted Wreath}), and it is shown that  every primitive group
belongs to exactly one of these types. We remark that in~\cite{C3}
this subdivision into types is refined, namely the PA type
in~\cite{LPSLPS} is partitioned in four parts, which are called HS (\emph{Holomorphic simple}), HC (\emph{Holomorphic compound}), CD
(\emph{Compound Diagonal}) and PA.  For what follows, we find it convenient to use this subdivision into eight types of the finite primitive primitive groups.

In this paper we investigate Conjecture~\ref{q:1} using the O'Nan-Scott theorem.

\section{Primitive groups of HA, TW, HS and HC type}
\begin{prop}\label{prop:1}
Let $G$ be a transitive group on $\Omega$ containing a normal regular subgroup, let $\Gamma$ be a connected digraph with vertex set $\Omega$ and  left invariant by the action of $G$. Then $G_\alpha^{+[1]}=1$, for every $\alpha\in \Omega$. In particular, Conjecture~$\ref{q:1}~{\bf (i)}$ holds true when $G$ has O'Nan-Scott type $\mathrm{HA}$, $\mathrm{TW}$, $\mathrm{HS}$ and $\mathrm{HC}$ and the function $g$ can be taken so that $g(n):=1$, for every $n\in \mathbb{N}$.
\end{prop}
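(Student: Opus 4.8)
The plan is to exploit the regular normal subgroup to recoordinatize $\Omega$ and reinterpret $\Gamma$ as a Cayley digraph, thereby turning the vanishing of $G_\alpha^{+[1]}$ into a statement about automorphisms fixing a generating set. Let $R\unlhd G$ be regular on $\Omega$ and fix $\alpha\in\Omega$. Since $R$ is regular, the orbit map $r\mapsto\alpha^r$ is a bijection $R\to\Omega$, and $G=RG_\alpha$ with $R\cap G_\alpha=1$. The key structural observation I would record first is that, because $R\unlhd G$, each $g\in G_\alpha$ normalizes $R$ and, under the identification $\Omega\leftrightarrow R$, acts on $\Omega$ exactly by the conjugation automorphism $r\mapsto r^g=g^{-1}rg$ (here one uses $\alpha^g=\alpha$, so that $(\alpha^r)^g=\alpha^{rg}=\alpha^{r^g}$). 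Thus conjugation yields a homomorphism $G_\alpha\to\Aut(R)$, which is injective because $G$ is a permutation group.

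Next I would encode $\Gamma$ as a Cayley digraph. As $R$ preserves $\Gamma$ and acts regularly, for every $r\in R$ the map $\alpha^t\mapsto\alpha^{tr}$ is an automorphism of $\Gamma$, so the out-neighborhoods of $\Gamma$ are $R$-translates of $\Gamma^+(\alpha)$. Writing $S:=\{s\in R\mid \alpha^s\in\Gamma^+(\alpha)\}$, so that $\Gamma^+(\alpha)=\{\alpha^s\mid s\in S\}$, the connectedness of $\Gamma$ translates into $\langle S\rangle=R$. Here I would use that in a finite group the subsemigroup generated by $S$ coincides with the subgroup $\langle S\rangle$, together with $\langle S\rangle=\langle S\cup S^{-1}\rangle$; this makes the argument insensitive to whether connectedness is read in the directed or in the underlying undirected sense.

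The conclusion then follows quickly. An element $g\in G_\alpha$ lies in $G_\alpha^{+[1]}$ precisely when it fixes $\Gamma^+(\alpha)$ pointwise, that is, when $\alpha^{s^g}=\alpha^s$, equivalently $s^g=s$, for every $s\in S$; in other words the automorphism of $R$ induced by $g$ fixes $S$ pointwise. Since $S$ generates $R$, an automorphism fixing $S$ pointwise is the identity, so $g\in\cent{G}{R}$. But then, for every $r\in R$, one has $(\alpha^r)^g=\alpha^{rg}=\alpha^{gr}=(\alpha^g)^r=\alpha^r$, so $g$ fixes every point of $\Omega$ and hence $g=1$. This gives $G_\alpha^{+[1]}=1$, as claimed.

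For the ``in particular'' clause I would invoke two standard facts. First, by the O'Nan–Scott theorem in the refined form of~\cite{LPSLPS,C3}, a primitive group of type $\mathrm{HA}$, $\mathrm{TW}$, $\mathrm{HS}$ or $\mathrm{HC}$ possesses a normal subgroup acting regularly on $\Omega$ (the socle in types $\mathrm{HA}$ and $\mathrm{TW}$, and either of the two minimal normal subgroups in types $\mathrm{HS}$ and $\mathrm{HC}$). Second, a classical theorem on primitive groups guarantees that every orbital digraph attached to a suborbit of length $d>1$ is connected. Hence the first part applies and $G_\alpha^{+[1]}=1$, so the local group $G_\alpha^{\Gamma^+(\alpha)}$ is permutation isomorphic to $G_\alpha$; in particular $|G_{\alpha\beta}:G_\alpha^{+[1]}|=|G_{\alpha\beta}|$ and part~{\bf (i)} of Conjecture~\ref{q:1} is satisfied (with $g$ the identity function). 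I expect the only genuinely delicate point to be the Cayley-digraph reduction and the passage from connectedness to $\langle S\rangle=R$; the rest is formal once the conjugation picture is in place.
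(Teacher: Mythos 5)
Your proof is correct and takes essentially the same approach as the paper: both identify $\Omega$ with the regular normal subgroup so that $G_\alpha$ acts by conjugation, use connectedness of $\Gamma$ to see that the elements corresponding to $\Gamma^+(\alpha)$ generate the whole subgroup, and conclude that $G_\alpha^{+[1]}$ centralizes it, hence is trivial by faithfulness (the paper phrases this last step as $G_\alpha^{+[1]}=\cent{H}{N}=1$ rather than via an automorphism fixing a generating set pointwise, which is the same thing). The only other difference is immaterial: you take $g$ to be the identity function, which matches the literal inequality $|G_{\alpha\beta}|\le g(|G_{\alpha\beta}:G_\alpha^{+[1]}|)$ when $G_\alpha^{+[1]}=1$, whereas the paper's choice $g(n):=1$ corresponds to reading part {\bf (i)} as a bound on $|G_\alpha^{+[1]}|$.
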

\begin{proof}
Let $\alpha\in \Omega$. Let $N$ be a regular normal subgroup of $G$ and let $H:=G_\alpha$. Then, $G$ is the semidirect product of $N$ by $H$ (that is, $G=NG_\alpha$, $N\cap G_\alpha=1$ and $G=N\rtimes H$) and the action of $G$ on $\Omega$ is permutation equivalent to the ``affine'' action of $G$ on $N$, where $N$ acts on $N$ by right multiplication and where $H$ acts on $N$ by conjugation. In the rest of the proof, we use this identification. In particular, under this equivalence, $\alpha\in\Omega$ corresponds to $1\in N$. For each $\beta\in \Gamma^+(\alpha)$, we let $n_\beta$ be the element of $N$ corresponding to $\beta$, that is, $\alpha^{n_\beta}=\beta$.

Then $G_{\alpha\beta}=\cent H {n_\beta}$ and 
$$G_{\alpha}^{+[1]}=\bigcap_{\beta\in \Gamma^+(\alpha)}G_{\alpha\beta}=
\bigcap_{\beta\in \Gamma^+(\alpha)}{\cent H {n_\beta}}=\cent H{\langle n_\beta\mid \beta\in \Gamma^+(\alpha)\rangle}.$$
Since $\Gamma$ is connected, we deduce $N=\langle n_\beta\mid \beta\in \Gamma^+(\alpha)\rangle$. Hence $G_\alpha^{+[1]}= \cent H N=1$.  In other words, $G_\alpha$ acts faithfully on $\Gamma^+(\alpha)$.

When $G$ has O'Nan-Scott type $\mathrm{HA}$, $\mathrm{TW}$, $\mathrm{HS}$ and $\mathrm{HC}$, the proof follows by the fact that $G$ contains a normal regular subgroup and by the fact that the non-trivial orbital graphs of $G$ are connected.
\end{proof}

\section{Primitive groups of SD type}\label{sec:diagonal}

We start by recalling the structure of the finite primitive groups 
of SD type. This will also allow us to set up the notation for this
section.

Let $\ell\geq 1$ and let $T$ be a non-abelian simple group. Consider
the group $N=T^{\ell+1}$ and $D=\{(t,\ldots,t)\in N\mid t\in T\}$, a
diagonal subgroup of $N$. Set $\Omega:=N/D$, the set of 
right cosets of $D$ in $N$. Then $|\Omega|=|T|^\ell$. Moreover we
may identify each element $\omega\in \Omega$ with an element
of $T^\ell$ as follows: the right coset
$\omega=D(\alpha_0,\alpha_1,\ldots,\alpha_\ell)$
contains a unique element whose first coordinate is $1$, namely, the
element
$(1,\alpha_0^{-1}\alpha_1,\ldots,\alpha_0^{-1}\alpha_\ell)$. We
choose this
distinguished coset representative.
Now the element $\varphi$ of $\Aut(T)$ acts on $\Omega$ by
$$
D(1,\alpha_1,\ldots,\alpha_\ell)^\varphi=D(1,\alpha_1^\varphi,\ldots,\alpha_\ell^\varphi).
$$
Note that this action is well-defined because $D$ is
$\Aut(T)$-invariant.
Next, the element $(t_0,\ldots,t_\ell)$ of $N$ acts on $\Omega$ by
$$
D(1,\alpha_1,\ldots,\alpha_\ell)^{(t_0,\ldots,t_\ell)}=
D(t_0,\alpha_1t_1,\ldots,\alpha_\ell
  t_\ell)=D(1,t_0^{-1}\alpha_1t_1,\ldots,t_0^{-1}\alpha_\ell t_\ell).$$
Observe that the action induced by  $(t,\ldots,t)\in N$ on $\Omega$ is
the same as the action induced by the inner automorphism 
corresponding to the conjugation by $t$.
Finally, the element $\sigma$ in $\Sym(\{0,\ldots,\ell\})$  acts
on $\Omega$ simply by permuting the coordinates. Note that this action
is well-defined because $D$ is $\Sym(\ell+1)$-invariant.

The set of all permutations we described generates a group $W$ isomorphic
to $$T^{\ell+1}\cdot (
\Out(T)\times \Sym(\ell+1)).$$ A subgroup $G$ of $W$ containing the socle 
$N$ of $W$ is primitive if either $\ell=2$ or $G$  acts primitively by 
conjugation on the $\ell+1$ simple direct factors of $N$, see~\cite[Theorem~4.5A]{DM}. The group $G$ is said to be primitive of SD type, when the second case occurs, that is, $N\unlhd G\le W$ and $G$ acts primitively by conjugation on the $\ell+1$ simple direct factors of $N$.

 Write 
$$
M=\{(t_0,t_1,\ldots,t_\ell)\in N\mid t_0=1\}.
$$ 
Clearly, $M$ is a normal subgroup of $N$ acting
regularly on $\Omega$. Since the stabilizer in $W$ of the point
$D(1,\ldots,1)$ is $\Sym(\ell+1)\times \Aut(T)$, we obtain
$$
W=(\Sym(\ell+1)\times \Aut(T))M.
$$ 
Moreover,  every element $x\in W$
can be written uniquely as $x=\sigma\varphi m$, with $\sigma\in \Sym(\ell+1)$,
$\varphi\in\Aut(T)$ and $m\in M$.

\begin{thrm}\label{SD}
Let $G$ be a primitive group on $\Omega$ of SD type, let $\alpha$ and $\beta$ be two distinct elements from $\Omega$ and consider the action of $G$ on the orbital graph determined by $(\alpha,\beta)$. Then one of the following holds
\begin{itemize}
\item $G_\alpha^{+[1]}=1$,
\item $|G_{\alpha}^{+[1]}|=\ell+1$, $G_{\alpha}^{+[1]}\le \Sym(\ell+1)$ and $G_{\alpha}^{+[1]}$ acts regularly on $\{1,\ldots,\ell+1\}$. Moreover, let $(t_0,t_1,\ldots,t_\ell)\in N$ with $t_0=1$ and $\beta=D(t_0,t_1,\ldots,t_\ell)$. The mapping $G_\alpha^{+[1]}\to T$ defined by $\sigma\to t_{0^{\sigma^{-1}}}$ is a group homomorphism. 
\end{itemize}
\end{thrm}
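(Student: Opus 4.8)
The plan is to set up explicit coordinates for the point stabilizer $G_\alpha$ and for the out-neighborhood $\Gamma^+(\alpha)$, compute $G_\alpha^{+[1]}$ as an intersection of arc stabilizers, and then read off its structure. Using the description in the preamble, I take $\alpha=D(1,\ldots,1)$, so that $G_\alpha=G\cap(\Sym(\ell+1)\times\Aut(T))$, and write a general element of $G_\alpha$ uniquely as $\sigma\varphi$ with $\sigma\in\Sym(\ell+1)$ and $\varphi\in\Aut(T)$. I take $\beta=D(1,t_1,\ldots,t_\ell)=D(t_0,t_1,\ldots,t_\ell)$ with $t_0=1$, so that $\Gamma^+(\alpha)=\beta^{G_\alpha}$. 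Since $G_\alpha^{+[1]}$ is by definition the kernel of the action of $G_\alpha$ on $\Gamma^+(\alpha)$, and this kernel is normal in $G_\alpha$, the first task is to determine which elements $\sigma\varphi$ fix \emph{every} point of $\beta^{G_\alpha}$.

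\textbf{Computing the kernel.} First I would work out the condition for a single element $x=\sigma\varphi\in G_\alpha$ to fix $\beta$. Applying the action formulas to $\beta=D(1,t_1,\ldots,t_\ell)$, the element $\varphi$ sends $t_i\mapsto t_i^\varphi$ and $\sigma$ permutes the coordinates $0,1,\ldots,\ell$; after renormalizing to the distinguished coset representative (multiplying on the left by the new zeroth coordinate inverted), the condition $\beta^x=\beta$ becomes a system of equations relating the $t_i$, $\sigma$, and $\varphi$. To land in $G_\alpha^{+[1]}$, such an $x$ must fix not just $\beta$ but all of its $G_\alpha$-translates; the cleanest way to capture this is to note that $x\in G_\alpha^{+[1]}$ iff $x$ centralizes the \emph{image} of $G_\alpha$ acting on $\Gamma^+(\alpha)$, equivalently iff $[x,G_\alpha]$ fixes $\beta$, i.e. $G_\alpha^{+[1]}=\bigcap_{g\in G_\alpha}(G_{\alpha\beta})^g$ is the normal core of $G_{\alpha\beta}$ in $G_\alpha$. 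Analyzing this core, I expect the $\Aut(T)$-component to be forced trivial (since conjugating by suitable diagonal/automorphism elements moves the $t_i$ around enough to kill any nontrivial $\varphi$), leaving $G_\alpha^{+[1]}\le\Sym(\ell+1)$. This already dichotomizes into the two stated cases.

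\textbf{The two cases and the homomorphism.} Once $G_\alpha^{+[1]}\le\Sym(\ell+1)$, a permutation $\sigma$ lies in the kernel iff fixing $\beta^g$ for all $g$ imposes, coordinate by coordinate, the relation $t_{i^\sigma}=t_i$ (up to the global twist from renormalization) for the given pattern of $t_i$'s and all their translates. The key structural claim is that if $\sigma\ne 1$ fixes everything, then the orbit structure of $\langle G_\alpha^{+[1]}\rangle$ on $\{0,1,\ldots,\ell\}$ must be such that $G_\alpha^{+[1]}$ acts \emph{regularly}; this should come out of primitivity of $G$ on the $\ell+1$ factors together with the fact that $G_\alpha^{+[1]}$ is normal in $G_\alpha$ and hence its induced permutation action on the factors is normalized by the (primitive, hence transitive) conjugation action of $G$. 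A normal subgroup of a point stabilizer in a primitive group that acts on the coordinates, combined with the rigidity forced by the renormalization equations, yields regularity and $|G_\alpha^{+[1]}|=\ell+1$. Finally, for the homomorphism $\sigma\mapsto t_{0^{\sigma^{-1}}}$: tracking the zeroth coordinate through the renormalized action of $\sigma$ on $\beta$ produces exactly the element $t_{0^{\sigma^{-1}}}\in T$, and the composition rule $\sigma\tau$ combined with regularity (so that the relevant coordinates match up without collision) gives $t_{0^{(\sigma\tau)^{-1}}}=t_{0^{\tau^{-1}}}\,t_{0^{\sigma^{-1}}}^{(\cdots)}$; checking that the twisting automorphism disappears because $\varphi$ is trivial on the kernel makes this a genuine homomorphism.

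\textbf{The main obstacle} I anticipate is the bookkeeping in the renormalization step: because each coset is represented by its unique element with first coordinate $1$, applying $\sigma$ (which may move coordinate $0$) forces a left-multiplication that simultaneously twists \emph{all} remaining coordinates, so the fixed-point equations are nonlinear and intertwined across coordinates. Disentangling these to prove both the regularity of the $\Sym(\ell+1)$-action and the precise form $\sigma\mapsto t_{0^{\sigma^{-1}}}$ of the homomorphism — while verifying it respects multiplication — is where the real work lies; the reduction $G_\alpha^{+[1]}\le\Sym(\ell+1)$ and the appeal to primitivity for regularity I expect to be comparatively routine once the coordinate formalism is pinned down.
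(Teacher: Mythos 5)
Your outline follows the same skeleton as the paper's argument (coordinatize $\alpha=D(1,\ldots,1)$, kill the $\Aut(T)$-part of $G_\alpha^{+[1]}$, invoke primitivity of $G_\alpha$ on the $\ell+1$ simple factors, then verify the homomorphism by tracking coordinate $0$), but it has a genuine gap at the decisive step: the claim that $G_\alpha^{+[1]}$ is \emph{regular} on $\{0,1,\ldots,\ell\}$ with $|G_\alpha^{+[1]}|=\ell+1$. Normality of $G_\alpha^{+[1]}$ in $G_\alpha$ together with primitivity of $G_\alpha$ on the factors gives only the dichotomy ``trivial projection or transitive projection'' to $\Sym(\ell+1)$; transitivity is far from regularity, since a transitive normal subgroup of a primitive group need not be regular (e.g.\ $\mathrm{Alt}(\ell+1)\unlhd \Sym(\ell+1)$ in the natural action), and you identify no concrete ``rigidity from the renormalization equations'' that would force semiregularity. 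What is really needed is an upper bound $|G_\alpha^{+[1]}|\le \ell+1$, and the paper obtains it by a specific trick absent from your proposal: set $H:=\norm G{T_0}$, the normalizer in $G$ of the first simple factor of the socle. Then $H$ is transitive on $\Omega$ and contains $M=T_1\times\cdots\times T_\ell$ as a normal \emph{regular} subgroup, so Proposition~\ref{prop:1} (connected invariant digraph plus normal regular subgroup forces trivial kernel) applies to $H$ and gives $H_\alpha^{+[1]}=1$; since $H_\alpha^{+[1]}=H\cap G_\alpha^{+[1]}$ and $|G:H|=\ell+1$, it follows that $|G_\alpha^{+[1]}|\le \ell+1$, which combined with transitivity yields regularity and the exact order. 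Nothing in your proposal supplies this bound or any substitute for it.

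Two secondary points. First, your reduction to $G_\alpha^{+[1]}\le \Sym(\ell+1)$ is only asserted (``I expect the $\Aut(T)$-component to be forced trivial''). Note the subtlety: using the translates $\beta^{\iota_t}$, $t\in T$, one readily shows $G_\alpha^{+[1]}\cap\Aut(T)=1$ (via $\cent {\Aut(T)}{T}=1$), but this is weaker than trivial projection; the paper upgrades it structurally, observing that $G_\alpha^{+[1]}$ and $\iota(T)$ are normal in $G_\alpha$ with trivial intersection, hence commute, and $\cent {W_\alpha}{\iota(T)}=\Sym(\ell+1)$. (A direct computation can also be pushed through, but it must actually be carried out; and your claim that containment in $\Sym(\ell+1)$ ``already dichotomizes into the two stated cases'' is premature, as the dichotomy needs the normality-plus-primitivity argument.) Also, your parenthetical ``$x\in G_\alpha^{+[1]}$ iff $x$ centralizes the image of $G_\alpha$'' is incorrect as stated (the kernel is not the preimage of the centralizer of the image), though your normal-core characterization $G_\alpha^{+[1]}=\bigcap_{g\in G_\alpha}(G_{\alpha\beta})^g$ is right. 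Second, the homomorphism law is left with an unexplained twist ``$t_{0^{\sigma^{-1}}}^{(\cdots)}$'': the paper's verification compares the coordinates in positions $0$ and $0^{\tau}$ of the two representatives of $\beta=\beta^{\tau}=\beta^{\sigma\tau}$, which identifies the diagonal correction element as exactly $t_{0^{\sigma^{-1}}}$ and yields $w(\sigma\tau)=w(\sigma)w(\tau)$ with no twist. These secondary gaps are fillable; the regularity gap is not, short of importing the $\norm G{T_0}$ argument or an equivalent idea.
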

\begin{proof}
We use the notation that we have established above. Without loss of generality we may assume that $$\alpha=D(1,1,\ldots,1).$$ Write $$\beta:=D(1,t_1,\ldots,t_\ell),$$ for some $t_1,\ldots,t_\ell\in T$. We set $t_0:=1$, in particular, $\beta=D(t_0,t_1,\ldots,t_\ell)$. This notation will make the last part of our proof easier to follow.

 Let $\varphi\in G_\alpha^{+[1]}\cap \Aut(T)$. For each $t\in T$, we let $\iota_t\in \Aut(T)\le W_\alpha$ denote the permutation on $\Omega$ induced by the conjugation via $t$. Observe that $\iota_t\in G\cap W_\alpha=G_\alpha$, for every $t\in T$, because $T^{\ell+1}=N\le G$. As $\varphi\in G_\alpha^{+[1]}$ and $\iota_t\in G_\alpha$, we deduce that $\varphi$  fixes $\beta^{\iota_t}$, for every $t\in T$. This means that
\begin{equation}\label{eq1}
D(1,t_1^t,\ldots,t_\ell^t)=\beta^{\iota_t}=\beta^{\iota_t})^\varphi=D(1,t_1^t,\ldots,t_\ell^t)^\varphi=D(1,t_1^{t\varphi},\ldots,t_\ell^{t\varphi}),
\end{equation}
for every $t\in T$. Since $\beta\ne \alpha$, there exists $i\in \{1,\ldots,\ell\}$ with $t_i\ne 1$. Now,~\eqref{eq1} gives $t_i^{t\varphi}=t_i^{t}$, for every $t\in T$. Therefore $$\varphi\in \bigcap_{t\in T}\cent {\Aut(T)}{t_i^t}=\cent {\Aut(T)}{\langle t_i^t\mid t\in T\rangle}=\cent {\Aut(T)}{T}=1.$$
This shows that 
\begin{equation}\label{eq:2}G_\alpha^{+[1]}\cap \Aut(T)=1.
\end{equation}

Now, $G_\alpha^{+[1]}$ is a normal subgroup of $G_\alpha$.  Since $G_\alpha$ acts primitively as a group of permutations on the $\ell+1$ simple direct factors of $T^{\ell+1}$, we obtain that either $G_\alpha^{+[1]}$ projects trivially on $\Sym(\ell+1)$ or $G_\alpha^{+[1]}$ projects to a transitive subgroup of $\Sym(\ell+1)$.
If $G_\alpha^{+[1]}$ projects trivially on $\Sym(\ell+1)$, then $G_\alpha^{+[1]}\le \Aut(T)$ and hence $G_\alpha^{+[1]}=1$ by~\eqref{eq:2}. (In particular, in this case Conjecture~\ref{q:1} part~{\bf (i)} holds true). Therefore, for the rest of this proof we assume that 
\begin{center}$G_\alpha^{+[1]}$ projects to a transitive subgroup of $\Sym(\ell+1)$.\end{center}

Observe now that $\iota(T)=\{\iota_t\mid t\in T\}\le G_\alpha$ (because $T^{\ell+1}=N\le G$) and that $\iota(T)\lhd G_\alpha$ (because $W_\alpha=\Aut(T)\times \Sym(\ell+1)$ and $G_\alpha\le W_\alpha$). As $G_\alpha^{+[1]}\cap \Aut(T)=1$, we deduce $G_\alpha^{+[1]}\cap \iota(T)=1$. Since $G_{\alpha}^{+[1]}$ and $\iota(T)$ are both normal in $G_\alpha$, we deduce that  $G_\alpha^{+[1]}$ centralizes $\iota(T)$. Since $\cent {W_\alpha}{\iota(T)}=\Sym(\ell+1)$, we get $G_\alpha^{+[1]}\leq \Sym(\ell+1)$. Therefore $G_\alpha^{+[1]}$ is a transitive subgroup of $\Sym(\ell+1)$.

Next we show that $G_\alpha^{+[1]}$ is a regular subgroup of $\Sym(\ell+1)$. Let $H:=\norm G{T_0}$, where $T_0$ is the first simple direct factor of the socle $N$. Now, $H$ acts transitively on $\Omega$, because $N\le H$, and $H$ contains the normal regular subgroup $M=T_1\times \cdots \times T_\ell$. Therefore, by Proposition~\ref{prop:1} applied to $H$, we deduce $H_\alpha^{+[1]}=1$. As $|G:H|=|G:\norm G{T_0}|=\ell+1$ and $H_\alpha^{+[1]}=H\cap G_\alpha^{+[1]}$, we get $|G_\alpha^{+[1]}|\le \ell+1$. Since $G_\alpha^{+[1]}$ is a transitive subgroup of $\mathrm{Sym}(\ell+1)$, $G_\alpha^{+[1]}$ is a regular subgroup of $\Sym(\ell+1)$ and 
$$\ell+1=|G_\alpha^{+[1]}|.$$

\smallskip

We need to recall in detail the action of $\Sym(\ell+1)$ on $\Omega$. Given $\sigma\in \Sym(\ell+1)$ and $\omega=D(x_0,x_1,\ldots,x_\ell)\in \Omega$, we have
\begin{equation}\label{eq:3}\omega^\sigma=D(x_{0^{\sigma^{-1}}},x_{1^{\sigma^{-1}}},\ldots,x_{\ell^{\sigma^{-1}}}).
\end{equation}
The element $\sigma$ in the right hand side of~\eqref{eq:3} appears as $\sigma^{-1}$ to guarantee that this is a right action.

Recall $\beta=D(1,t_1,\ldots,t_\ell)=D(t_0,t_1,\ldots,t_\ell)$. We now define a mapping
\begin{center}
    \begin{tikzcd}[cells = {nodes={minimum width=3.5em, inner xsep=0pt}},
                   row sep=0pt]
w:G_\alpha^{+[1]} \ar[r]           &   T,   \\
\sigma      \ar[r, mapsto]   & t_{0^{\sigma^{-1}}}.
\end{tikzcd}
\end{center} 
In other words, in the light of~\eqref{eq:3}, $w(\sigma)$ is the first coordinate of $(t_0,t_1,\ldots,t_\ell)^\sigma=(t_{0^{\sigma^{-1}}},t_{1^{\sigma^{-1}}},\ldots,t_{\ell^{\sigma^{-1}}})$. 

Let $\sigma,\tau\in G_\alpha^{+[1]}$. 
Since $\tau$ fixes $\beta$, we have $$\beta=\beta^\tau=D(t_{0^{\tau^{-1}}},t_{1^{\tau^{-1}}},\ldots,t_{\ell^{\tau^{-1}}})$$
and since $\sigma \tau$ fixes $\beta$, we have also
$$\beta=\beta^{\sigma\tau}=D(t_{0^{(\sigma\tau)^{-1}}},t_{1^{(\sigma\tau)^{-1}}},\ldots,t_{\ell^{(\sigma\tau)^{-1}}}).$$
In other words, the two $(\ell+1)$-tuples $(t_{0^{\tau^{-1}}},t_{1^{\tau^{-1}}},\ldots,t_{\ell^{\tau^{-1}}})$ and 
$(t_{0^{(\sigma\tau)^{-1}}},t_{1^{(\sigma\tau)^{-1}}},\ldots,t_{\ell^{(\sigma\tau)^{-1}}})$ differ only by the left multiplication by an element of $D$. Therefore, there exists $t\in T$ such that
\begin{equation}\label{eq:4}
(tt_{0^{\tau^{-1}}},tt_{1^{\tau^{-1}}},\ldots,tt_{\ell^{\tau^{-1}}})=(t_{0^{(\sigma\tau)^{-1}}},t_{1^{(\sigma\tau)^{-1}}},\ldots,t_{\ell^{(\sigma\tau)^{-1}}}).
\end{equation}
By checking the first coordinates in~\eqref{eq:4}, we obtain 
$$tt_{0^{\tau^{-1}}}=t_{0^{(\sigma\tau)^{-1}}}.$$ Moreover, by comparing the coordinate appearing in position $0^{\tau}$ on the left-hand side and on the right-hand side of~\eqref{eq:4}, we deduce $tt_{(0^{\tau})^{\tau^{-1}}}=t_{(0^\tau)^{(\sigma\tau)^{-1}}}$, that is, $$t=tt_0=t_{0^{\sigma^{-1}}}.$$ Putting these two equations together, we obtain
$$w(\sigma)w(\tau)=t_{0^{\sigma^{-1}}}t_{0^{\tau^{-1}}}=tt_{0^{\tau^{-1}}}=t_{0^{(\sigma\tau)^{-1}}}=w(\sigma\tau).$$
This proves that our mapping $w:G_\alpha^{+[1]}\to T$ is a group homomorphism. 
\end{proof}

The proof of Proposition~\ref{SD} hints to the fact that in Conjecture~\ref{q:1} we do need the alternative~{\bf (ii)}. We show that this is indeed the case in the next example.

\begin{example}\label{examplemrone}{\rm
 Let $p$ be a prime number, let $k\ge 7$ be a positive integer and let $r$ be a primitive prime divisor of $p^{k}-1$, that is, $r$ is relatively prime to $p^i-1$ for each $i\in \{1,\ldots,k-1\}$. As $k\ge 7$, the existence of $r$ is guaranteed by Zsigmondy's theorem.
 
  Let $H:=V\rtimes C$ be the affine primitive group of degree $p^{k}$, where $V$ is an elementary abelian $p$-group of order $p^{k}$ and where $R$ is a cyclic group of order $r$. (We use an additive notation for $V$.) Let $T$ be a non-abelian simple group containing a cyclic subgroup $P$ of order $p$ and with $\cent TP=P$ and let $$w:V\to P$$ be an arbitrary surjective homomorphism.

We denote by $T^V$ the set of all functions from $V$ to $T$. Observe that $T^V$ is a group isomorphic to the Cartesian product of $|V|=p^k$ copies of $T$. We denote the elements of $T^V$ as functions $f:V\to T$.

We let $G$ be primitive group of diagonal type $$T^V\rtimes H.$$
Recall that the elements of $\Omega$ are right cosets of $D$ in $T^V$, where $D$ is the diagonal subgroup of $T^V$, that is, $D=\{f\in T^V\mid f\textrm{ is constant}\}$. Let $b=(t_v)_{v\in V}\in T^V$ where $t_v=\omega(-v)$, for every $v\in V$.

Let $\alpha:=D$, let $\beta:=Db$ and consider the orbital graph determined by $(\alpha,\beta)$.  Let $v\in V$. Then
$$b^v(x)=b(x-v)=w(-x+v)=w(-x)w(v)=w(v)w(-x)=w(v)b(x),\quad\forall x\in V.$$
Thus $b^v=w(v)b$ and hence $\beta^v=Db^v=Db=\beta$. This shows $V\le G_{\alpha\beta}$. Since $V\unlhd G_\alpha$, we deduce $G_\alpha^{+[1]}\le V$. Now, from Proposition~\ref{SD}, we have $G_\alpha^{+[1]}=V$. 

Thus $G_\alpha=(T\times R)G_{\alpha}^{+[1]}$ and 
$$G_{\alpha\beta}=(G_{\alpha\beta}\cap (T\times R))G_{\alpha}^{+[1]}.$$
Let $\varphi:=th\in G_{\alpha\beta}\cap (T\times R)$, with $t\in T$ and $h\in R$.  Observe that
\begin{equation}\label{eq:lastone}
b^{th}(0)=b(0^{h^{-1}})^t=b(0)^{t}=w(0)^t=1^t=1=w(0)=b(0).\end{equation}
Since $Db=\beta=\beta^{th}=Db^{th}=Db$, from~\eqref{eq:lastone} we get $b^{th}=b$.

For every $v\in \mathrm{Ker}(w)\le V$, we have
$$w(-v^{h^{-1}})^t=(b(v^{h-1}))^t=b^{th}(v)=b(v)=w(-v)=1.$$
Therefore, $w(v^{h^{-1}})=1$. This gives $\mathrm{Ker}(w)^{h^{-1}}=\mathrm{Ker}(w)$. As $\dim\mathrm{Ker}(w)=k-1\ne 0$ and as $R$ is a cyclic group of prime order acting irreducibly on the vector space $V$,  we deduce $h=1$. This shows that $\varphi=t\in T$.

Now, $b^t=b$ if and only if $t$ centralizes all the coordinates of $b$. In other words, $t\in \cent T P=P$. Summing up,
$$G_{\alpha\beta}=P\times V\hbox{ and }G_\alpha^{+[1]}=V.$$ 
Thus $|G_{\alpha\beta}:G_\alpha^{+[1]}|=|P|=p$ and $|G_\alpha^{+[1]}|=p^{k}$. However, we cannot bound the cardinality of $G_\alpha^{+[1]}$ with $p$ only.}
\end{example}

\section{The example for the Cameron and Fomin question}\label{sec:example}
Our construction is quite elaborate and requires a number of ingredients:
\begin{itemize}
\item let $A$ be a non-abelian simple group,
\item let $T$ be a non-abelian simple group containing a subgroup $Q$ with $Q= A\times A$ and with
$\cent T Q=1$,
\item let $H$ be a group containing $A$ with $A$ maximal in $H$, $A$ core-free in $H$ and, in the faithful permutation action of $H$ on the right cosets of $A$, there exist two points whose setwise stabilizer is the identity.
\end{itemize}
We observe that there are groups $A,T,Q$ and $H$ satisfying the hypothesis above. For instance, we may take $A:=\mathrm{Alt}(5)$, $T:=\mathrm{Alt}(10)$, $Q=\mathrm{Alt}(5)\times\mathrm{Alt}(5)\le T$ and $H:=\mathrm{PSL}_2(p)$ where $p$ is a  prime number with $p\ge 61$ and  $p\equiv \pm1\pmod{10}$. Clearly, $A$ and $T$ are non-abelian simple and $\cent T Q=1$. Moreover, using the hypothesis on $p$, $A$ is a maximal subgroup in the Aschbacher class $\mathcal{S}$ of $T$, see for instance~\cite[Table~$8.2$]{BHR}. Using the fact that $p>19$, we see from~\cite[Table~$1$]{Burness} that the base size of $T$ in the action on the right cosets of $A$ is $2$. Actually, from the arguments in~\cite{Burness}, it follows that whenever $p\ge 61$, there exist two points whose setwise stabilizer is the identity.

Let $V:=A^{|H:A|}$ and let $L:=V\rtimes H$ be a primitive group of TW type with regular socle $V$ and with point stabilizer $H$. The fact that $L$ is primitive in its action on $V$ follows from~\cite[Lemma~$4.7$A]{DM}. 

As in Example~\ref{examplemrone}, we denote by $T^V$ the set of all functions from $V$ to $T$. 
We let $G$ be primitive group of diagonal type $$T^V\rtimes L.$$
The elements of $\Omega$ are right cosets of $D$ in $T^V$, where $D$ is the diagonal subgroup of $T^V$, that is, $D=\{f\in T^V\mid f\textrm{ is constant}\}$.

Relabelling the elements in the domain $\{1,\ldots,|H:A|\}$, we may suppose that $1,2$ is a base for the action of $H$ and the setwise stabilizer of $\{1,2\}$ in $H$ is the identity. We define the group homomorphism
\begin{center}
    \begin{tikzcd}[cells = {nodes={minimum width=3.5em, inner xsep=0pt}},
                   row sep=0pt]
w:V=A^{|H:A|} \ar[r]           &   Q\le T,   \\
(a_1,a_2,\ldots,a_{|H:A|})      \ar[r, mapsto]   & (a_1,a_2).
\end{tikzcd}
\end{center} 

 Let $b\in T^V$ with $b(v)=\omega(v^{-1})$, for every $v\in V$. Let $\alpha:=D$, let $\beta:=Db$ and consider the orbital graph determined by $(\alpha,\beta)$.  Let $v\in V$. Then
$$b^v(x)=b(xv^{-1})=w((xv^{-1})^{-1})=w(vx^{-1})=w(v)w(x^{-1})=w(v)b(x),\quad\forall x\in V.$$
Thus $b^v=w(v)b$ and hence $\beta^v=Db^v=Db=\beta$. This shows $V\le G_{\alpha\beta}$. Since $V\unlhd G_\alpha$, we deduce $G_\alpha^{+[1]}\le V$. Now, from Proposition~\ref{SD}, we have $G_\alpha^{+[1]}=V$. 

Thus $G_\alpha=T\times L=T\times HV=T\times HG_{\alpha}^{+[1]}=(T\times H)G_{\alpha}^{+[1]}$ and 
\begin{equation}\label{display}
G_{\alpha\beta}=(G_{\alpha\beta}\cap (T\times H))G_{\alpha}^{+[1]}.
\end{equation}
Let $\varphi:=th\in G_{\alpha\beta}\cap (T\times H)$, with $t\in T$ and $h\in H$.  Observe that
\begin{equation}\label{eq:lastoneone}
b^{th}(1)=b(1^{h^{-1}})^t=b(1)^{t}=w(1)^t=(1,1)^t=(1,1)=w(1)=b(1).\end{equation}
Since $Db=\beta=\beta^{th}=Db^{th}=Db$, from~\eqref{eq:lastoneone} we get $b^{th}=b$.

For every $v=(1,1,a_3,\ldots,a_{|H:A|})\in \mathrm{Ker}(w)\le A^{|H:A|}=V$, we have
$$w((v^{-1})^{h^{-1}})^t=(b(v^{h^{-1}}))^t=b^{th}(v)=b(v)=w(v^{-1})=(1,1).$$
Therefore, $w((v^{-1})^{h^{-1}})=(1,1)$, for every $v=(1,1,a_3,\ldots,a_{|H:A|})\in\mathrm{Ker}(w)\le A^{|H:A|}=V$. This gives that $h$ fixes setwise $\{1,2\}$ and hence $h=1$, by our assumption on the permutation action of $H$ on the right cosets of $A$.
This shows that $\varphi=t\in T$.

Now, $b^t=b$ if and only if, for every $v=(a_1,a_2,\ldots,a_{|H:A|})\in V$, we have $b^t(v)=b(v)$, that is,
$(b(v))^t=b(v)$. This yields
$$(a_1^{-1},a_2^{-1})^t=(w(v^{-1}))^t=(b(v))^t=b(v)=w(v^{-1})=(a_1^{-1},a_2^{-1}).$$
Since this holds for each $(a_1,a_2)\in A\times A=Q\le T$, we deduce $t\in \cent TQ=1$. This shows that $\varphi=1$.

As $\varphi$ was an arbitrary element in $G_{\alpha\beta}\cap (T\times H)$, we get $G_{\alpha\beta}\cap (T\times H)=1$ and hence $G_{\alpha\beta}=G_\alpha^{+[1]}$ from~\eqref{display}. Therefore $G_{\alpha\beta}\unlhd G_\alpha$.

\thebibliography{10}

\bibitem{BHR}J.~N.~Bray, D.~F.~Holt, C.~M.~Roney-Dougal,
 \textit{The maximal subgroups of the low dimensional classical groups}, 
 London Mathematical Society Lecture Note Series \textbf{407}, Cambridge University Press, Cambridge, 2013. 

\bibitem{Burness}T.~C.~Burness, R.~.M.~Guralnick, J.~Saxl, Base sizes for $\mathcal{S}$-actions of finite classical groups, \textit{Israel J. Math.} \textbf{199} (2014), 711--756. 

\bibitem{Cameron}P.~J.~Cameron, in Combinatorics, Part 3: Combinatorial Group Theory (Math. Centrum, Amsterdam, 1974),  98--129.

\bibitem{Peter}P.~J.~Cameron, Finite permutation groups and finite simple groups, \textit{Bull. London Math. Soc.} \textbf{13} (1981), 1--22.
\bibitem{CPSS}P.~J.~Cameron, C.~E.~Praeger, J.~Saxl, G.~M.~Seitz, On the Sims conjecture and distance transitive graphs, \textit{Bull. London Math. Soc. }\textbf{15} (1983), 499--506.

\bibitem{DM}J.~D.~Dixon, B.~Mortimer, \textit{Permutation groups}, Graduate Texts in Mathematics \textbf{163}, Springer-Verlag, New York, 1996.


\bibitem{FGLPRV}J.~B.~Fawcett, M.~Giudici, C.~H.~Li, C.~E.~Praeger, G.~Royle, G.~Verret, Primitive permutation groups with a suborbit of length 5 and vertex-primitive graphs of valency 5, \textit{J. Combin. Theory Ser. A}  \textbf{157} (2018), 247--266.  

\bibitem{AVK0}A.~V.~Konygin, On primitive permutation groups with a stabilizer of two points normal in the stabilizer of one of them, \textit{Sib. \`Elektron. Mat. Izv.} \textbf{5} (2008), 387--406. 

\bibitem{AVK}A.~V.~Konygin, On primitive permutation groups with a stabilizer of two points that is normal in the stabilizer of one of them: case when the socle is a power of a sporadic simple group, \textit{Proc. Steklov Inst. Math.} \textbf{272} (2011), S65--S73.

\bibitem{AVK1}A.~V.~Konygin, On P. Cameron's question on primitive permutation groups with a stabilizer of two points that is normal in the stabilizer of one of them, \textit{Tr. Inst. Mat. Mekh.} \textbf{19} (2013), no. 3, 187--198; translation in \textit{Proc. Steklov Inst. Math.} \textbf{285} (2014), suppl. 1, 116--127.

\bibitem{AVK2}A.~V.~Konygin, On a question of Cameron on triviality in primitive permutation groups of the stabilizer of two points that is normal in the stabilizer of one of them, \textit{Tr. Inst. Mat. Mekh.} \textbf{21} (2015),  175--186.
\bibitem{LiLuMa}C.~H.~Li, Z.~P.~Lu, D.~Maru\v{s}i\v{c}, On primitive permutation groups with small suborbits and their orbital graphs, \textit{J. Algebra} \textbf{279} (2004), 749--770.


\bibitem{LPSLPS}M.~W.~Liebeck, C.~E.~Praeger, J.~Saxl, On  the O'Nan-Scott theorem for finite primitive permutation groups, \textit{J. Australian Math. Soc. (A)} \textbf{44} (1988), 389--396

\bibitem{KN}V.~D.~Mazurov, E.~I.~Khukhro, \textit{The Kourovka notebook. Unsolved problems in group theory. Eighteenth edition.} Russian Academy of Sciences Siberian Division, Institute of Mathematics, Novosibirsk, 2014.

\bibitem{Quirin}W.~L.~Quirin, Primitive permutation groups with small orbitals, \textit{Math. Z.} \textbf{122} (1971), 267--274.

\bibitem{PSV1} P.~Poto\v{c}nik, P.~Spiga, G.~Verret, Bounding the order of the vertex-stabiliser in $3$-valent vertex-transitive and $4$-valent arc-transitive graphs, \textit{J. Combin. Theory Ser. B} \textbf{111} (2015), 148--180. 

\bibitem{PSV2} P.~Potočnik, P.~Spiga, G.~Verret, On the order of arc-stabilisers in arc-transitive graphs with prescribed local group, \textit{Trans. Amer. Math. Soc.} \textbf{366} (2014), 3729--3745.

\bibitem{C3}C.~E.~Praeger, Finite quasiprimitive graphs, in \textit{Surveys in combinatorics}, London Mathematical Society Lecture Note Series, vol. 24 (1997), 65--85.

\bibitem{Sami}A.~Q.~Sami, Locally dihedral amalgams of odd type, \textit{J. Algebra }\textbf{298} (2006), 630--644.
\bibitem{Sims}C.~C.~Sims, Graphs and permutation groups, \textit{Math. Z. }\textbf{95} (1967), 76--86.
\bibitem{Verret}G.~Verret,  On the order of arc-stabilizers in arc-transitive graphs,
\textit{Bull. Aust. Math. Soc. }\textbf{80} (2009), 498--505.
\bibitem{Verret1}G.~Verret, On the order of arc-stabilisers in arc-transitive graphs, II, 
\textit{Bull. Aust. Math. Soc. }\textbf{87} (2013), 441--447. 
\bibitem{Wong}W.~J.~Wong, Determination of a class of primitive groups, \textit{Math. Z. }\textbf{99} (1967), 235--246.
\end{document}